\newcolumntype{C}{>{$}c<{$}} 
\definecolor{uququq}{rgb}{0.25,0.25,0.25}
\newtheorem{thm}{Theorem}[section]
\newtheorem{cor}[thm]{Corollary}
\newtheorem{prop}[thm]{Proposition}
\theoremstyle{definition}
\newtheorem{defn}[thm]{Definition}
\theoremstyle{definition}
\theoremstyle{definition}
\newtheorem{ex}[thm]{Example}
\newcommand{\Z}{\mathbb{Z}}
\newcommand{\F}{\mathbb{F}}
\def\H{\mathrm{H}}
\begin{document}

\title{More Heffter Spaces via finite fields}

\author{Marco Buratti}
\address{SBAI - Sapienza Universit\`a di Roma, Via Antonio Scarpa 16, I-00161 Roma, Italy}
\email{marco.buratti@uniroma1.it}

\author{Anita Pasotti}
\address{DICATAM - Sez. Matematica, Universit\`a degli Studi di Brescia, Via
Branze 43, I-25123 Brescia, Italy}
\email{anita.pasotti@unibs.it}
%
%

\keywords{Heffter array; resolvable configuration; cyclotomy.}
\subjclass[2010]{}

\maketitle

\begin{abstract}
A $(v,k;r)$ Heffter space is a resolvable $(v_r,b_k)$ configuration whose points form a half-set of an abelian group $G$
and  whose blocks are all zero-sum in $G$.
It was recently proved that there are infinitely many orders $v$ for which, given any pair $(k,r)$ with $k\geq3$ odd,
a $(v,k;r)$ Heffter space exists. This was obtained by imposing a point-regular automorphism group. Here we relax this request by asking
for a point-semiregular automorphism group. In this way the above result is extended also to the case $k$ even.
\end{abstract}

\section{Introduction}

We first recall some notions of design theory. A point-block incidence structure
is a {\it partial linear space} if any two distinct points are contained in at most one block.
A $(v_r,b_k)$ {\it configuration} is a partial linear space $(V,{\mathcal B})$ with $v$ points and $b$ blocks each of size $k$
such that every point is contained in exactly $r$ blocks. It is resolvable if there exists a partition ({\it resolution}) of ${\mathcal B}$ 
into $r$ classes ({\it parallel classes}) each of which is in its turn a partition of $V$.
In a $(v_r,b_k)$ configuration we necessarily have $vr=bk$. 
If it is resolvable, then $v$ is obviously divisible by $k$.
An automorphism group of a resolvable configuration is a group of permutations on the point set leaving
the resolution invariant.
We refer to \cite{BS22,G} for some results of resolvable configurations.

A point-block incidence structure $(V,{\mathcal B})$ is said to be {\it $G$-additive} if $V$ is a subset of an abelian group $G$
and each block is zero-sum in $G$. This notion, firstly introduced in \cite{CFP} for classic $t$-$(v,k,\lambda)$ designs,
gave rise to an extensive literature \cite{BN1,BN2,CFP2,FP}, and more recently it was extended to other combinatorial designs \cite{BMN}.
Heffter spaces have been defined very recently in \cite{BP2} as a generalization of the well-known {\it Heffter arrays}.
\begin{defn}
A $(v,k;r)$ {\it Heffter space} over an abelian group $G$ is a $G$-additive and resolvable $(v_r,b_k)$ configuration with point set $V$ such
that $V$ and $-V$ partition $G\setminus\{0\}$ (briefly, $V$ is a {\it half-set} of $G$). 
It is {\it simple} if every block can be ordered in such a way that its partial sums are pairwise distinct.
\end{defn}

A more general definition of a {\it relative Heffter space} will be proposed in \cite{BJMP}.

Note that in a  $(v,k;r)$ Heffter space we necessarily have $v=kn$ for some integer $n$ since it is resolvable.

A $(kn,k;1)$ Heffter space is nothing but a $(kn,k)$ {\it Heffter system}, that is a partition of a half-set of an abelian 
group of order $2kn+1$ into zero-sum parts of size $k$. 
As explained in \cite{BP2},
the existence of a simple $(kn,k)$ Heffter system over $\Z_{2kn+1}$ for any admissible $k\geq3$
can be deduced from well-known results on {\it cyclic cycle systems} \cite{BGL,BurDel}.
Two Heffter systems with the same point set $V$ are said to be {\it orthogonal} if each block of a system
shares at most one element with each block of the other. 
Thus a $(kn,k;r)$ Heffter space is equivalent to a set of $r$ {\it mutually orthogonal} $(kn,k)$ Heffter systems.

A $(kn,k;2)$ Heffter space with parallel classes ${\mathcal P}=\{B_1,\dots,B_n\}$ and ${\mathcal P}'=\{B'_1,\dots,B'_n\}$,
is equivalent to the $n\times n$ matrix $A$ whose cell $a_{i,j}$ is empty if $B_i$ and $B'_j$ are disjoint or contains their common element otherwise.
Such a matrix $A$ is a {\it Heffter array} with parameters $n$ and $k$ and it is typically denoted by H$(n;k)$.

Heffter arrays have been introduced by Archdeacon \cite{A} for their important applications, especially in topological graph theory.
It was proved with the contribution of several authors \cite{ADDY,CDDY,DW}
that there exists an $\H(n; k)$ (or equivalently a $(kn,k;2)$ Heffter space) if and only if $n \geq k \geq  3$.
For a survey on Heffter arrays we refer to \cite{PD}.

It has been conjectured \cite{CMPPSums} that every zero-sum subset $B$ of an abelian group having no pair of opposite elements
and not containing zero admits an ordering whose related partial sums are pairwise distinct.
The conjecture has been proved for $|B|\leq10$ (see \cite{AL,CMPPSums}) so that every Heffter space with block size $k\leq10$
is automatically simple.

The {\it density} of a $(v,k;r)$ Heffter space is the  ratio $\delta={r(k-1)\over v-1}$ 
which measures how much the space is far from being a {\it Steiner $2$-design}, case in which $\delta=1$.
The densest Heffter space constructed in \cite{BP2} has parameters $(121,11;9)$ with density $\delta=0.75$. Recently,
a more dense Heffter space has been obtained \cite{N}; its parameters are $(105,5;21)$ with density $\delta\simeq0.81$.
The existence of a Heffter space with density 1 is still an open question.
For an extensive discussion on this problem, see Section 2 in \cite{BP2}.

\medskip
Finite fields have been used in \cite{B} for the construction of {\it rank-one} Heffter arrays and
in \cite{BP2} for the construction of Heffter spaces with an automorphism group acting regularly on the points.
In this paper finite fields are exploited again to construct $({q-1\over2},k;r)$ Hefftter spaces with $q$ a prime power. 
This time our search is done with the weaker request of an automorphism group of order
${q-1\over2k}$ acting semiregularly on the points.
In this way we get the following main result which in \cite{BP2} was obtained only for $k$ odd.

\begin{thm}\label{main}
For any $k\geq 3$ and any $r\geq0$, there are infinitely many values of $v$ for which there exists a $(v,k;r)$ Heffter space.
\end{thm}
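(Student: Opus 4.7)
The plan is to build the required Heffter spaces inside $\F_q$ for infinitely many prime powers $q$, using a multiplicative subgroup as the point-semiregular automorphism group and obtaining $v=(q-1)/2$. I would restrict to prime powers satisfying $q\equiv 2k+1\pmod{4k}$; since $\gcd(2k+1,4k)=1$, Dirichlet's theorem provides infinitely many primes of this form. This congruence forces the unique subgroup $C\le\F_q^*$ of index $2k$ to have odd order $(q-1)/(2k)$, whence $-1\notin C$, so the $2k$ cosets of $C$ split into $k$ antipodal pairs $\{D_0,-D_0\},\ldots,\{D_{k-1},-D_{k-1}\}$. The half-set $V=D_0\cup\cdots\cup D_{k-1}$ is then stable under multiplication by $C$, and since such multiplication preserves additive zero-sums, $C$ is a candidate point-semiregular automorphism group acting with exactly $k$ orbits on $V$.

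I would then reduce to the construction of suitable \emph{base blocks}. Call $B=\{b_0,\ldots,b_{k-1}\}$ a base block if $b_i\in D_i$ for all $i$ and $\sum_ib_i=0$; since the $D_i$ are pairwise distinct $C$-cosets, $B$ has trivial $C$-stabilizer, so its $C$-orbit $\{cB:c\in C\}$ is a zero-sum $C$-invariant partition of $V$, i.e.~a parallel class. A $(v,k;r)$ Heffter space thus reduces to finding $r$ base blocks $B^{(1)},\ldots,B^{(r)}$ whose $C$-orbits jointly form a partial linear space. A short calculation shows that for $x\in D_i$ and $y\in D_{i'}$ with $i\ne i'$, the pair $\{x,y\}$ lies in some block of the $l$-th orbit iff $x/y=b_i^{(l)}/b_{i'}^{(l)}$; hence the partial-linear condition is equivalent to the requirement that, for every unordered pair $\{i,i'\}$ with $i\ne i'$, the $r$ ratios $b_i^{(l)}/b_{i'}^{(l)}$ (all living in the fixed coset $D_iD_{i'}^{-1}$) are pairwise distinct.

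The main obstacle is exhibiting such a family of base blocks for each fixed $k,r$ and all sufficiently large admissible $q$. A natural strategy is to parameterize the blocks by auxiliary elements $c_1,\ldots,c_r\in C$, for instance by setting $b_i^{(l)}=c_l^{m_i}a_i$ for fixed representatives $a_i\in D_i$ and integer exponents $m_i$ whose pairwise differences are coprime to $|C|$; the distinct-ratio condition is then automatic as soon as the $c_l$ are distinct, while the zero-sum condition collapses to the polynomial requirement that $c_1,\ldots,c_r$ be simultaneous roots in $C$ of $\sum_ia_ix^{m_i}$. Arranging the $a_i$ within the prescribed cosets $D_i$ to produce a polynomial with $r$ such roots is the real technical step, to be achieved either by an explicit cyclotomic construction tailored to the cosets $D_i$ or by a character-sum estimate of Weil type guaranteeing a positive count of admissible configurations once $q$ is large in terms of $k$ and $r$. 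Combined with Dirichlet's theorem, this yields infinitely many values of $v=(q-1)/2$ for which a $(v,k;r)$ Heffter space exists, proving the theorem.
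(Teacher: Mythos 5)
Your first two paragraphs are sound and reproduce, in essence, the paper's reduction (Theorem \ref{general}): with $q\equiv 2k+1\pmod{4k}$, the half-set $V=D_0\cup\dots\cup D_{k-1}$, base blocks having one entry per cyclotomic class, zero-sum rows, and the distinct-ratio condition for each pair of columns are exactly the notion of a $(V,k,r)$ Heffter difference matrix, and your verification that the $C$-orbits of such blocks give a resolvable additive partial linear space matches the paper's argument. The gap is in the third paragraph: the only genuinely hard part of the theorem --- producing, for every fixed $(k,r)$ and all sufficiently large admissible $q$, the $r$ base blocks --- is left as a plan (``to be achieved either by an explicit cyclotomic construction \dots or by a character-sum estimate of Weil type''), so the proof is missing precisely where the paper does its real work.

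Moreover, the specific template you propose, $b^{(l)}_i=c_l^{m_i}a_i$, has a structural obstruction you do not address. The zero-sum requirements $\sum_i a_i c_l^{m_i}=0$, $1\le l\le r$, are $r$ linear conditions on the $k$ coefficients $a_0,\dots,a_{k-1}$; since the theorem demands arbitrary $r$ for fixed $k$, as soon as $r\ge k$ a nonzero solution exists only if the $r\times k$ generalized Vandermonde matrix $(c_l^{m_i})$ has rank smaller than $k$, i.e.\ only for very special degenerate choices of $c_1,\dots,c_r$, and you would additionally need the resulting kernel vector to meet the product of cosets $D_0\times\cdots\times D_{k-1}$. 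Equivalently, all the $c_l$ must be roots lying in $C$ of the single $k$-term polynomial $\sum_i a_i x^{m_i}$, and sparse polynomials are strongly constrained in how many roots they can have in a small multiplicative subgroup; none of this follows from a routine Weil-type count. The paper avoids the issue entirely by building the Heffter difference matrix by induction on $r$: given $r-1$ rows, it adds one new zero-sum row greedily, choosing $b_0,\dots,b_{k-3}$ one at a time outside small forbidden sets and then invoking a cyclotomic consequence of Weil's bound (Theorem 1.1 of \cite{BP}) to find a final element $x\in C^{2k}_{k-2}$ with $x+s\in C^{2k}_{k-1}$ avoiding at most $n=(2r-1)(k-2)+r-1$ bad values; this works for all $q>Q(k,r)$, and Dirichlet's theorem then gives infinitely many admissible $q$. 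To salvage your approach you would have to either abandon the single-template parameterization (e.g.\ adopt a row-by-row construction as above) or restrict it to $r<k$, and in either case actually carry out the counting argument.
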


We note that, for the time being, for all known $(v,k;r)$ Heffter spaces with $r>2$ we have that $2v+1$ is a prime power.
In a forthcoming paper \cite{BP3}, with completely different methods, we will present some new constructions most of which do not have this constraint.

\section{Heffter spaces via Heffter difference matrices}
As it is standard, the field of order $q$ and its multiplicative group will be denoted by $\F_q$ and $\F_q^*$, respectively.
We also agree to use the following notation. 

Given a prime power $q\equiv2k+1$ (mod $4k$), it will be tacitly assumed that a primitive element $g$ of $\F_q$ is fixed. 
The subgroup of $\F_q^*$ of index $2k$ will be denoted by $C^{2k}$ and its cosets in $\F_q^*$,
which are $g^0C^{2k}$, $g^1C^{2k}$, \dots, $g^{2k-1}C^{2k}$, are called the {\it cyclotomic classes of $\F_q$ of order $2k$}. 
They will be denoted by $C^{2k}_0$, $C^{2k}_1$, \dots, $C^{2k}_{2k-1}$, respectively.
Note that the hypothesis $q\equiv2k+1$ (mod $4k$) implies that ${q-1\over2k}$ is odd so that $-1\in C^{2k}_k$.
It follows that any set of the form 
\begin{equation}\label{cyclotomic half-set}
(-1)^{\alpha_0} C^{2k}_0 \ \cup \ (-1)^{\alpha_1} C^{2k}_1 \ \cup \ \dots \ \cup \ (-1)^{\alpha_{k-1}} C^{2k}_{k-1}
\end{equation}
with the $\alpha_i$'s in $\{0,1\}$ is a half-set of $(\F_q,+)$. We will call it a {\it cyclotomic half-set of $\F_q$ of order $2k$}.

In particular, if $q\equiv3$ (mod 4), the cyclotomic half-sets of $\F_q$ of order $2$ are simply the sets $C^2_0$ and $C^2_1$
of the non-zero squares and non-squares of $\F_q$. 
In general, for $q\equiv 2k+1$ (mod $4k$), there are precisely $2^k$ cyclotomic half-sets of $\F_q$ of order $2k$.

In the following, given a cyclotomic half-set $V$ as in (\ref{cyclotomic half-set}),
we set $V_i=(-1)^{\alpha_i} C^{2k}_i$, for $0\leq i\leq k-1$.
It will be also assumed that $V_0=C^{2k}_0$ rather than $-C^{2k}_0$.

The crucial tool for getting our main result is
an $r\times k$ matrix $B$ with entries in a cyclotomic half-set of order $2k$. 
We agree to denote the $h$-th row and the $i$-th column of such a matrix $B$ by $B_h$ and $B^i$, respectively.
Also, for any pair of columns $B^i$ and $B^j$ we will consider the multiset of ratios
$${B^i/B^j}:=\biggl{\{}{b_{h,i}\over b_{h,j}} \ | \ 1\leq h\leq r\biggl{\}}.$$

\begin{defn}\label{HM}
Let $q\equiv 2k+1$ $($mod $4k)$ be a prime power and let $V=V_0 \ \cup \ \dots \ \cup \ V_{k-1}$ be a cyclotomic half-set of $\F_q$.
A {\it $(V,k,r)$ Heffter difference matrix} is an $r\times k$ matrix $B$ (rows indexed from 1 to $r$, columns indexed from 0 to $k-1$)
with the following properties:
\begin{itemize}
\item[$(i)$] every row $B_h$ is zero-sum in $\F_q$;
\medskip
\item[$(ii)$] every element of the column $B^i$ belongs to $V_{i}$ for $0\leq i\leq k-1$;
\medskip
\item[$(iii)$] ${B^i/B^j}$ does not have repeated elements for $0\leq i<j\leq k-1$.
\end{itemize}
The matrix $B$ is {\it simple} if the partial sums of every row $B_h$ are pairwise distinct.
\end{defn}

Who has familiarity with {\it difference matrices} (see, e.g., \cite{BJL} or \cite{CD}) will recognize that the transpose of a $(V,k,r)$ Heffter difference matrix
is, so to say, a ``partial" $(q-1,k,1)$ difference matrix in $\F_q^*$.

We say that a $(V,k,r)$ Heffter difference matrix is {\it normalized} if $B^0$ is the all-one column.
Note that if $B$ is a $(V,k,r)$ Heffter difference matrix, then the matrix $B'$ obtained from $B$ 
by dividing the $h$-th row by $b_{h,0}$, for $1\leq h\leq r$, is a normalized $(V,k,r)$ Heffter difference matrix.
Clearly, $B'$ is simple if and only if $B$ is simple as well.
Thus the search for (possibly simple) Heffter difference matrices can be done up to normalizations.

Let us show how Heffter difference matrices give rise to Heffter spaces.

\begin{thm}\label{general}
Let $q\equiv 2k+1$ $($mod $4k)$ be a prime power and let $V=V_0 \ \cup \ \dots \ \cup \ V_{k-1}$ be a cyclotomic half-set of $\F_q$.
If there exists a $(V,k,r)$ Heffter difference matrix,
then there exists a $({q-1\over2},k;r)$ Heffter space. If the matrix is simple, the space is simple as well.
\end{thm}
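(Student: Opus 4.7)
The plan is to construct the Heffter space by developing the rows of $B$ under the multiplicative action of $C^{2k}$. For each $h\in\{1,\dots,r\}$, set $\mathcal{P}_h=\{cB_h : c\in C^{2k}\}$, take the block collection $\mathcal{B}=\bigcup_{h=1}^{r}\mathcal{P}_h$, and propose $\{\mathcal{P}_1,\dots,\mathcal{P}_r\}$ as the resolution. The structural fact powering the whole construction is that, because $-1\in C^{2k}_k$, each $V_i$ is a single coset of $C^{2k}$ in $\F_q^*$; hence $V_0,\dots,V_{k-1}$ are pairwise disjoint cosets whose union is exactly $V$, and $C^{2k}$ acts regularly on each $V_i$ by multiplication.

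First I would verify that each $\mathcal{P}_h$ is a parallel class on $V$. Condition $(ii)$ puts $b_{h,i}\in V_i$, so every $cB_h$ is a $k$-subset of $V$ meeting each $V_i$ in exactly one point; since $C^{2k}$ acts regularly on each $V_i$, the orbit $\mathcal{P}_h$ has size $(q-1)/(2k)$ and partitions $V$. Condition $(i)$ then ensures that every block is zero-sum in $\F_q$, since $\sum(cB_h)=c\sum B_h=0$. So the incidence structure is $\F_q$-additive, resolvable, and has the right parameters $v=(q-1)/2$, $b=r(q-1)/(2k)$, and replication number $r$.

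The main step, and the point where condition $(iii)$ is decisive, is the partial-linear-space property. Suppose two distinct points $x,y$ lie in both $cB_h$ and $c'B_{h'}$. Since the $V_i$'s are pairwise disjoint, the indices $i,j$ with $x\in V_i$, $y\in V_j$ are uniquely determined and must satisfy $i\neq j$. Writing $x=cb_{h,i}=c'b_{h',i}$ and $y=cb_{h,j}=c'b_{h',j}$ yields
\[
\frac{b_{h,i}}{b_{h,j}}=\frac{x}{y}=\frac{b_{h',i}}{b_{h',j}}.
\]
Condition $(iii)$ forbids any repetition in the multiset $B^i/B^j$, forcing $h=h'$; then $c=c'$ is immediate. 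This is the only non-trivial verification, and the definition of a Heffter difference matrix has been crafted precisely so that condition $(iii)$ makes it automatic.

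Finally, simplicity transfers for free: if an ordering of $B_h$ has pairwise distinct partial sums in $\F_q$, then multiplying those partial sums by any $c\in C^{2k}$ preserves distinctness, so the same ordering applied to $cB_h$ witnesses its simplicity. Every block thus inherits a valid ordering, and the resulting Heffter space is simple whenever $B$ is.
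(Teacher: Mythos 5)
Your proposal is correct and follows essentially the same route as the paper: develop the rows under the multiplicative action of $C^{2k}$, use condition $(ii)$ and the fact that each $V_i$ is a single coset of $C^{2k}$ to get the parallel classes, condition $(iii)$ for the partial-linear-space property, condition $(i)$ for additivity, and multiplication by $c$ to transfer simplicity. No gaps.
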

\begin{proof}
Let $B$ be a $(V,k;r)$ Heffter difference matrix.
For $1\leq h\leq r$, let ${\mathcal P}_h=\{tB_h \ | \ t\in C^{2k}\}$ be the orbit of $B_h$ under the natural multiplicative action of $C^{2k}$.
Set $\displaystyle{\mathcal B}=\bigcup_{h=1}^r{\mathcal P}_h$ and consider the point-block incidence structure $(V,{\mathcal B})$.
To prove the first part of the assertion it is enough to show that this structure is an additive and resolvable partial linear space.
The second part will be obvious since the partial sums of each block $tB_h$ are nothing but the partial sums of $B_h$ multiplied by $t$.

\medskip
We first prove that $(V,{\mathcal B})$ is a partial linear space.

\medskip
Take any block $tB_h=\{tb_{h,0},tb_{h,1},\dots,tb_{h,k-1}\}\in{\mathcal B}$.
By property $(ii)$ of Definition \ref{HM} we have $b_{h,i}\in V_i$ for $0\leq i\leq k-1$. Then, observing that $C^{2k}$ fixes $V_i$ for every $i$, 
we have $tb_{h,i}\in V_i$ for $0\leq i\leq k-1$. Thus every block has exactly one element in $V_i$ for $0\leq i\leq k-1$.

Let $x$, $y$ be two distinct elements of $V$ so that we have $x\in V_i$ and $y\in V_j$ for a suitable pair $(i,j)$.
Assume that there are two blocks of $\mathcal B$, say $tB_h$ and $uB_\ell$,
containing $\{x,y\}$. 
We have to prove that they coincide, i.e., that we have $t=u$ and $h=\ell$.

First of all, we necessarily have $i\neq j$ otherwise the blocks $tB_h$ and $uB_\ell$
would contain two elements of $V_i$ contradicting what we established before.

Note that $x\in V_i$ and $x\in tB_h \ \cap \ uB_\ell$ implies that $x=tb_{h,i}=ub_{\ell,i}$.
Also, $y\in V_j$ and $y\in tB_h \ \cap \ uB_\ell$ implies that $y=tb_{h,j}=ub_{\ell,j}$.
These equalities give $\displaystyle{x\over y}={b_{h,i}\over b_{h,j}}={b_{\ell,i}\over b_{\ell,j}}$.
It follows that $h=\ell$ otherwise $B^i/B^j$ would have repeated elements contradicting property $(iii)$
of Definition \ref{HM}.
Now, from $h=\ell$ and the equality $tb_{h,i}=ub_{\ell,i}$ we also get $t=u$.

\medskip
It is easy to see that $(V,{\mathcal B})$ is additive.

\medskip
Indeed every block of ${\mathcal B}$ is a multiple of a row of $B$ which, by property $(i)$ of Definition \ref{HM},
is zero-sum in $\F_q$. It follows that every block of ${\mathcal B}$ is zero-sum as well.

\medskip

\medskip
We finally show that $(V,{\mathcal B})$ is resolvable.

\medskip
By property $(ii)$ of Definition \ref{HM}, every row $B_h$ of $B$ has precisely one element in $V_i$ for $0\leq i\leq k-1$.
Thus we have $\displaystyle\bigcup_{t\in C^{2k}}tB_h=V$
which means that the blocks of ${\mathcal P}_h$ partition $V$.
It follows that
${\mathcal R}:=\{{\mathcal P}_h \ | \ 1\leq h\leq r\}$
is a resolution of $(V,{\mathcal B})$ and the assertion follows.
\end{proof}

We note that the $({q-1\over2},k;r)$ Heffter space constructed in the above theorem has an automorphism group
isomorphic to $C^{2k}$ acting semiregularly on the points and fixing each parallel class. This is the group generated by the
permutation $\alpha: x\in V \longrightarrow g^{2k}x\in V$ 
where $g$ is a primitive element of $\F_q^*$.

We will prove our main result applying Theorem \ref{general} with $V$ the ``standard" cyclotomic half-set of order $2k$,
that is $C^{2k}_0 \ \cup \ C^{2k}_1 \ \cup \ \dots \ \cup \ C^{2k}_{k-1}$.
Anyway, for a fixed pair $(q,k)$, the use of other cyclotomic half-sets may help to obtain greater values of $r$ for which a $({q-1\over2},k;r)$ Heffter space exists.

In the following examples we determine the maximum $r$ for which Theorem \ref{general} allows to determine a $({q-1\over2},k;r)$ Heffter system
for three special instances of the pair $(q,k)$. This does not exclude that one may get a greater value of $r$ with another method, possibly by computer.

\begin{ex}\label{73}
Let us determine the maximum $r$ for which Theorem \ref{general} allows to get a $(36,4;r)$ Heffter space.
Note that $q=2\cdot36+1=73$ is a prime
and that $q\equiv 9$ (mod 16), i.e., $q\equiv 2k+1$ (mod $4k$) with $k=4$. 
We take $g=5$ as primitive element of $\F_{73}$.
The use of Theorem \ref{general} with $V$ the standard cyclotomic half-set of order 8 fails
to find a $(36,4;r)$ Heffter space with $r>2$. Indeed an exhaustive computer search has shown that
a $(V,4,3)$ Heffter difference matrix does not exist.
On the other hand now we show that the following is a $(W,4,4)$ Heffter difference matrix with $W=C^8_0 \ \cup \ -C^8_1 \ \cup \ C^8_2 \ \cup \ C^8_3$.
$$
B=\begin{array}{|r|r|r|r|} \hline
1 &  68 & 25 & 52  \\ \hline
1 &  53 & 49 & 43  \\ \hline
1 &  33 & 50 & 62  \\ \hline
1 &  59 & 35 & 51  \\ \hline
\end{array}$$
First of all one can check that each row is zero-sum in $\F_{73}$ so that property $(i)$ of Definition \ref{HM} holds.
Now consider the natural isomorphism $\phi$ between $\F_{73}^*$ and $\Z_{72}$ mapping
any element  $g^i$ into $i$. We have:
$$
\phi(B)=\begin{array}{|r|r|r|r|} \hline
0 &  37 & 2 & 3  \\ \hline
0 &  53 & 66 & 51  \\ \hline
0 &  61 & 10 & 19  \\ \hline
0 &  5 & 34 & 27  \\ \hline
\end{array}$$
We note that for $i=0,2,3$ the elements of $\phi(B^i)$ are all congruent to $i$ modulo 8 which
means that the elements of $B^i$ belong to $C^8_{i}$. Now note that the elements of $\phi(B^1)$ are all congruent to $5$ modulo 8
which means that the elements of $B^1$ belong to $C^8_{5}=-C^8_1$. Recalling the definition of $W$, we conclude that $B^i$ is contained in $W_i$ for $0\leq i\leq 3$,
i.e., property $(ii)$ of Definition \ref{HM} holds.

It is readily seen that the difference $\phi(B^0)-\phi(B^i)$ does not have repeated elements for $1\leq i\leq 3$. 
Note that $\phi(B^i)-\phi(B^j)$ does not have repeated elements also for $1\leq i<j\leq 3$:
$$\phi(B^1)-\phi(B^2)=(37,53,61,5)-(2,66,10,34)=(35, 59, 51, 43);$$
$$\phi(B^1)-\phi(B^3)=(37,53,61,5)-(3,51,19,27)=(34, 2, 42, 50);$$
$$\phi(B^2)-\phi(B^3)=(2,66,10,34)-(3,51,19,27)=(71, 15, 63, 7).$$
The above implies that $B^i/B^j$ does not have repeated elements for $0\leq i<j\leq 3$ that is property $(iii)$ of Definition \ref{HM}. 
Thus $B$ gives rise to a $(36,4;4)$ Heffter space.
Following the instructions given in the proof of Theorem \ref{general}, 
its resolution $\{{\mathcal P}_1,\dots,{\mathcal P}_4\}$ is the following.

\medskip
\begin{center}
\begin{tabular}{|l|c|r|c|r|c|r|c|r|c|r|c|r|}
\hline {\quad\quad\quad$\mathcal {P}_1$}    \\
\hline $\{1, 68, 25, 52\}$\\
\hline $\{2, 63, 50, 31\}$  \\
\hline $\{4, 53, 27, 62\}$\\
\hline $\{8, 33, 54, 51\}$  \\
\hline $\{16, 66, 35, 29\}$\\
\hline $\{32, 59, 70, 58\}$ \\
\hline $\{64, 45, 67, 43\}$ \\
\hline $\{55, 17, 61, 13\}$ \\
\hline $\{37, 34, 49, 26\}$ \\
\hline
\end{tabular}\quad\quad\quad
\begin{tabular}{|l|c|r|c|r|c|r|c|r|c|r|c|r|}
\hline {\quad\quad\quad$\mathcal{P}_2$}    \\
\hline $\{1, 53, 49, 43\}$\\
\hline $\{2, 33, 25, 13\}$  \\
\hline $\{4, 66, 50, 26\}$\\
\hline $\{8, 59, 27, 52\}$  \\
\hline $\{16, 45, 54, 31\}$\\
\hline $\{32, 17, 35, 62\}$ \\
\hline $\{64, 34, 70, 51\}$ \\
\hline $\{55, 68, 67, 29\}$ \\
\hline $\{37, 63, 61, 58\}$ \\
\hline
\end{tabular}\quad\quad\quad
\begin{tabular}{|l|c|r|c|r|c|r|c|r|c|r|c|r|}
\hline {\quad\quad\quad$\mathcal{P}_3$}    \\
\hline $\{1, 33, 50, 62\}$\\
\hline $\{2, 66, 27, 51\}$  \\
\hline $\{4, 59, 54, 29\}$\\
\hline $\{8, 45, 35, 58\}$  \\
\hline $\{16, 17, 70, 43\}$\\
\hline $\{32, 34, 67, 13\}$ \\
\hline $\{64, 68, 61, 26\}$ \\
\hline $\{55, 63, 49, 52\}$ \\
\hline $\{37, 53, 25, 31\}$ \\
\hline
\end{tabular}\quad\quad\quad
\begin{tabular}{|l|c|r|c|r|c|r|c|r|c|r|c|r|}
\hline {\quad\quad\quad$\mathcal{P}_4$}    \\
\hline $\{1, 59, 35, 51\}$\\
\hline $\{2, 45, 70, 29\}$  \\
\hline $\{4, 17, 67, 58\}$\\
\hline $\{8, 34, 61, 43\}$  \\
\hline $\{16, 68, 49, 13\}$\\
\hline $\{32, 63, 25, 26\}$ \\
\hline $\{64, 53, 50, 52\}$ \\
\hline $\{55, 33, 27, 31\}$ \\
\hline $\{37, 66, 54, 62\}$ \\
\hline
\end{tabular}\quad
\end{center}
\end{ex}

\medskip
\begin{ex}\label{71}
Let us determine the maximum $r$ for which Theorem \ref{general} allows to get a $(35,5;r)$ Heffter space.
Note that $q=2\cdot35+1=71$ is a prime
and that $q\equiv 11$ (mod 20), i.e., $q\equiv 2k+1$ (mod $4k$) with $k=5$. 
Taking $g=7$ as primitive element of $\F_{71}$, one can check that the following is a $(V,5,3)$ Heffter difference matrix
with $V$ the standard cyclotomic half-set of order 10 of $\F_{71}$:
$$
\begin{array}{|r|r|r|r|r|r|} \hline
1 &  7 & 38 & 42 & 54 \\ \hline
1 &  46 & 9 & 28 & 58 \\ \hline
1 &  52 & 6 & 59 & 24  \\ \hline
\end{array}$$
An exhaustive computer search has shown 
that there is no $(V,5,r)$ Heffter difference matrix with $r>3$.
On the other hand, using the cyclotomic half-set
$$W=C^{10}_0 \ \cup \ -C^{10}_1 \ \cup \ C^{10}_2 \ \cup \ -C^{10}_3 \ \cup \ C^{10}_4$$
it has been possible to find the following $(W,5,5)$ Heffter difference matrix
$$
\begin{array}{|r|r|r|r|r|r|} \hline
1 &  25 & 49 & 43 & 24 \\ \hline
1 &  60 & 38 & 27 & 16 \\ \hline
1 &  19 & 50 & 18 & 54  \\ \hline
1 &  3 & 4 & 5 & 58  \\ \hline
1 &  40 & 57 & 29 & 15  \\ \hline
\end{array}$$

\medskip\noindent
Then the above matrix gives rise to a $(35,5;5)$ Heffter space. One can check that it is the same Heffter space 
constructed in Example 4.5 in \cite{BP2}.
Note, in particular, that $-C^{10}_1=C^{10}_2$ and that $-C^{10}_3=C^{10}_8$ so that $W$ is nothing but $C^2$. 
\end{ex}

\medskip
\begin{ex}\label{109}
Let us determine the maximum $r$ for which Theorem \ref{general} allows to get a $(54,6;r)$ Heffter space.
Note that $q=2\cdot54+1=109$ is a prime
and that $q\equiv 13$ (mod 24), i.e., $q\equiv 2k+1$ (mod $4k$) with $k=6$. 
Taking $g=6$ as primitive element of $\F_{109}$, one can check that the following is a $(V,6,5)$ Heffter difference matrix
with $V$ the standard cyclotomic half-set of order 12 of $\F_{109}$:
$$
\begin{array}{|r|r|r|r|r|r|} \hline
1 &  6 & 36 & 19 & 5 & 42 \\ \hline
1 &  96 & 88 & 92 & 89 & 70 \\ \hline
1 &  52 & 31 & 33 & 3 & 98  \\ \hline
1 &  69 & 74 & 68 & 97 & 18  \\ \hline
1 &  14 & 60 & 86 & 7 & 50  \\ \hline
\end{array}$$
An exhaustive computer search has shown 
that there is no $(V,6,r)$ Heffter difference matrix with $r>5$.
On the other hand, a better result can be obtained using the cyclotomic half-set
$$W=C^{12}_0 \ \cup \ -C^{12}_1 \ \cup \ C^{12}_2 \ \cup \ -C^{12}_3 \ \cup \ C^{12}_4 \ \cup \ C^{12}_5.$$
Indeed one can check that the following is a $(W,6,6)$ Heffter difference matrix.
$$
\begin{array}{|r|r|r|r|r|r|} \hline
1 &  103 & 31 & 41 & 81 & 70 \\ \hline
1 &  13 & 74 & 54 & 26 & 50 \\ \hline
1 &  58 & 88 & 2 & 80 & 98  \\ \hline
1 &  56 & 36 & 76 & 5 & 44  \\ \hline
1 &  57 & 100 & 23 & 7 & 30  \\ \hline
1 &  95 & 60 & 17 & 3 & 42  \\ \hline
\end{array}$$

\medskip\noindent
Thus there exists a $(54,6;6)$ Heffter space. 
\end{ex}

\medskip
A computer search allowed us to find a $(V,6,6)$ Heffter difference matrix with $V$ the standard cyclotomic half-set of order 12 of $\F_q$
for all primes $q\equiv13$ (mod 24) greater than 109 and smaller than 8000 . We report our computer results for the first four values of $q$.

$$q=157:\quad 
\begin{array}{|r|r|r|r|r|r|} \hline
1 &  5 & 25 & 125 & 115 & 43 \\ \hline
1 &  22 & 13 & 129 & 89 & 60 \\ \hline
1 &  34 & 110 & 8 & 19 & 142  \\ \hline
1 &  137 & 105 & 116 & 17 & 95  \\ \hline
1 &  69 & 127 & 54 & 113 & 107  \\ \hline
1 &  61 & 51 & 65 & 81 & 55  \\ \hline
\end{array}\quad\quad\quad q=181:
\begin{array}{|r|r|r|r|r|r|} \hline
1 &  2 & 4 & 8 & 148 & 18 \\ \hline
1 &  47 & 37 & 155 & 44 & 78 \\ \hline
1 &  109 & 116 & 7 & 14 & 115  \\ \hline
1 &  118 & 147 & 35 & 38 & 23  \\ \hline
1 &  58 & 138 & 31 & 102 & 132  \\ \hline
1 &  96 & 94 & 74 & 121 & 157  \\ \hline
\end{array}
$$

\medskip
$$q=229:\quad 
\begin{array}{|r|r|r|r|r|r|} \hline
1 &  6 & 36 & 216 & 171 & 28 \\ \hline
1 &  74 & 209 & 143 & 183 & 77 \\ \hline
1 &  73 & 215 & 128 & 51 & 219  \\ \hline
1 &  137 & 154 & 109 & 129 & 157  \\ \hline
1 &  163 & 118 & 52 & 37 & 87  \\ \hline
1 &  102 & 174 & 145 & 83 & 182  \\ \hline
\end{array}\quad\quad\quad q=277:
\begin{array}{|r|r|r|r|r|r|} \hline
1 &  5 & 25 & 125 & 71 & 50 \\ \hline
1 &  135 & 220 & 262 & 136 & 77 \\ \hline
1 &  44 & 121 & 145 & 165 & 78  \\ \hline
1 &  80 & 196 & 51 & 202 & 24  \\ \hline
1 &  221 & 229 & 54 & 147 & 179  \\ \hline
1 &  150 & 123 & 195 & 243 & 119  \\ \hline
\end{array}
$$

Thus we can state the following.
\begin{prop}\label{>109}
There exists a $({q-1\over2},6;6)$ Heffter space for every prime $q\equiv13$ $($mod $24)$ in the range $[109,8000]$.
\end{prop}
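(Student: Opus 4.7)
The plan is to reduce the statement to a computer verification via Theorem \ref{general}. Since $q \equiv 13 \pmod{24}$ is exactly the congruence $q \equiv 2k+1 \pmod{4k}$ for $k=6$, Theorem \ref{general} tells us that it suffices, for each such prime $q$ in the specified range, to exhibit a $(V,6,6)$ Heffter difference matrix over some cyclotomic half-set $V$ of $\F_q$ of order $12$. In particular, if we can produce one with $V$ the \emph{standard} cyclotomic half-set $V = C^{12}_0 \cup C^{12}_1 \cup \cdots \cup C^{12}_5$, the conclusion follows.

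The first step is to handle the boundary case $q=109$, which has already been done in Example \ref{109} (with a non-standard half-set $W$). For $q \in \{157,181,229,277\}$ the explicit $6 \times 6$ matrices displayed just before the statement are candidate $(V,6,6)$ Heffter difference matrices with $V$ the standard half-set; I would verify their three defining properties directly: (i) sum each of the six rows modulo $q$ and check it vanishes; (ii) compute the discrete logarithm of each entry with respect to a fixed primitive element $g$ of $\F_q$ (for example, $g=5$ for $q=157,277$, $g=2$ for $q=181$, $g=6$ for $q=229$), and verify that the log of every entry in the $i$-th column is $\equiv i \pmod{12}$; (iii) for each of the $\binom{6}{2}=15$ ordered pairs $(i,j)$ with $i<j$, form the multiset of logs $\{\phi(b_{h,i})-\phi(b_{h,j}) \mod (q-1) : 1 \le h \le 6\}$ and check it has no repeated element. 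Passing these three checks for a given matrix certifies, by Theorem \ref{general}, the existence of a $(\tfrac{q-1}{2},6;6)$ Heffter space over $\F_q$.

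The remaining primes $q \equiv 13 \pmod{24}$ with $277 < q \le 8000$ (a little under four hundred values) would be handled the same way: for each such $q$, a computer search produces a candidate $(V,6,6)$ Heffter difference matrix over the standard half-set, and the three checks above — all elementary modular arithmetic in $\F_q$ and $\Z_{q-1}$ — certify it. Thus the proof reduces to running the verification routine on a stored list of matrices, one per prime in the range.

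The main obstacle is not the verification, which is a routine finite computation once the matrices are in hand, but rather the \emph{search} that produces them: a naive enumeration of $r\times k = 6\times 6$ matrices with prescribed cyclotomic class of each column is far too large, so one must prune aggressively (e.g.\ fix the normalization $B^0 = \mathbf 1$, fill in rows greedily while maintaining property $(iii)$, and backtrack whenever the zero-sum constraint $(i)$ or the distinct-ratios constraint $(iii)$ fails). Once a matrix has been found and recorded for each prime in $[109,8000]$, the proof of Proposition \ref{>109} is nothing more than an invocation of Theorem \ref{general} applied to those matrices.
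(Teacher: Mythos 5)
Your proposal follows essentially the same route as the paper: the result is exactly the combination of Theorem \ref{general} with a computer search producing a $(V,6,6)$ Heffter difference matrix over the standard cyclotomic half-set for each prime $q\equiv13\pmod{24}$ with $109<q<8000$ (of which the four displayed matrices are samples), together with Example \ref{109} covering $q=109$ via the non-standard half-set $W$. Your verification checklist (row sums, discrete-log congruences for the columns, distinctness of the fifteen log-difference multisets) is precisely what certifies properties $(i)$--$(iii)$ of Definition \ref{HM}, so the argument is correct and matches the paper's.
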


\section{Main result}

Now we show that Theorem \ref{general} is always successfull provided that the prime power
$q$ is sufficiently large.  

\begin{thm}\label{e=k}
There exists a simple $({q-1\over2},k;r)$ Heffter space
for any prime power $q\equiv2k+1$ $($mod $4k)$ greater than $Q(k,r)$ where
$$Q(k,r)={1\over4}\biggl{[}(2k-1)^2+\sqrt{(2k-1)^4+16k(kn+1)}\biggl{]}^2 \quad
\mbox{with $n=(2r-1)(k-2)+r-1$}.$$
\end{thm}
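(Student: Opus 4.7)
By Theorem \ref{general}, it suffices to construct a simple $(V,k,r)$ Heffter difference matrix $B$ over $\F_q$, where $V$ is the standard cyclotomic half-set of order $2k$. After normalization each row of $B$ takes the form $(1,x_1,\ldots,x_{k-1})$ with $x_i\in C^{2k}_i$ and $1+x_1+\cdots+x_{k-1}=0$. I would produce the rows $B_1,\ldots,B_r$ inductively: given $B_1,\ldots,B_{h-1}$, look for a new row whose pairwise ratios $x_i/x_j$ ($0\le i<j\le k-1$) differ from every $b_{h',i}/b_{h',j}$ with $h'<h$ and whose $k$ partial sums are pairwise distinct.

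The core of the plan is a cyclotomic counting argument via Weil's bound on multiplicative character sums. Fix a multiplicative character $\chi$ of $\F_q^*$ of order $2k$ and use the standard orthogonality relation to detect each class $C^{2k}_i$. The number $N^+$ of ``good'' tuples $(x_1,\ldots,x_{k-1})$ satisfying all required cyclotomic memberships together with the linear zero-sum constraint becomes a combination of incomplete multiplicative character sums over $\F_q$, with main term of order $q^{k-2}/(2k)^{k-1}$ and, after grouping the nontrivial contributions so that the underlying polynomial has degree at most $(2k-1)^2$, a Weil error of order $(2k-1)^2\sqrt{q}\cdot q^{k-3}/(2k)^{k-1}$.

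On the other hand, each previous row together with a column pair $(i,j)$ imposes one forbidden linear condition $x_i=c\,x_j$, and each pair of columns whose partial sums would coincide imposes an analogous condition. Each event cuts one degree of freedom of a good tuple and thus contributes at most $q^{k-3}/(2k)^{k-1}$ bad tuples (up to a combinatorial factor). Careful bookkeeping of which conditions are genuinely independent of the cyclotomic memberships and the zero-sum relation should collapse the total count of independent forbidden events to exactly $n=(2r-1)(k-2)+r-1$, so that the bad count is at most $4k(kn+1)\cdot q^{k-3}/(2k)^{k-1}$.

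Combining the estimates, the inductive step succeeds whenever $q-(2k-1)^2\sqrt{q}-4k(kn+1)>0$. Regarding this as a quadratic inequality in $\sqrt{q}$, solving for $\sqrt{q}$ and squaring reproduces exactly the threshold $q>Q(k,r)$. The main obstacle, I expect, is engineering the Weil bound so that the error factor in front of $\sqrt{q}$ is precisely $(2k-1)^2$, rather than a larger power such as $(2k-1)^{k-1}$ that would arise from a naive application of Weil to each multiplicative character individually; this will require a clever grouping of the cyclotomic indicator sums into a single character product. The remaining combinatorial accounting yielding $n=(2r-1)(k-2)+r-1$ and the constant $4k(kn+1)$ is routine but must be done carefully to match the stated threshold.
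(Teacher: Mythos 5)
Your overall strategy (reduce to a simple $(V,k,r)$ Heffter difference matrix via Theorem \ref{general} and add rows inductively) matches the paper, but the quantitative core of your plan is a genuine gap, and it is precisely the part you defer. You propose to count all admissible tuples $(x_1,\dots,x_{k-1})$ at once by a multi-variable character-sum estimate with main term $q^{k-2}/(2k)^{k-1}$, an error ``of order $(2k-1)^2\sqrt q\,q^{k-3}/(2k)^{k-1}$'', and a bad-tuple count collapsing ``to exactly $n$'' forbidden events. None of this is established: a naive expansion of the $k-1$ cyclotomic indicators produces error constants growing like $(2k-1)^{k-1}$ (as you yourself note), and the forbidden events in an all-at-once count are not $n$ in number --- each of the $r-1$ old rows forbids a ratio coincidence for every one of the $\binom{k}{2}$ column pairs, and the simplicity conditions add more, so roughly $(r-1)\binom{k}{2}+O(k^2)$ hypersurfaces, each of whose intersections with the cyclotomic conditions again needs a character-sum estimate rather than a clean $q^{k-3}/(2k)^{k-1}$ bound. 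With these corrections your inequality would not reproduce the stated threshold $Q(k,r)$, whose exact shape (the positive root of $y^2-(2k-1)^2y-4k(kn+1)$ in $y=\sqrt q$) is what the theorem asserts.

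The paper sidesteps the multi-dimensional estimate entirely. Given a simple $(V,k,r-1)$ matrix $A$, the first $k-2$ entries $b_0,\dots,b_{k-3}$ of the new row are chosen greedily: each $b_j$ merely has to avoid a forbidden set of size at most $r(k-3)$ inside the full class $C^{2k}_j$ of size $\frac{q-1}{2k}$, so no character sums are needed there. Only the last two entries require cyclotomy: one needs a single element $x$ with $x\in C^{2k}_{k-2}$ and $x+s\in C^{2k}_{k-1}$ (where $s=b_0+\dots+b_{k-3}$), avoiding a forbidden set $Z=Z_1\cup Z_2\cup Z_3\cup Z_4$ whose size is at most $2(r-1)(k-2)+(r-1)+(k-2)=n$. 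This is a one-variable, two-condition cyclotomic problem, and Theorem 1.1 of \cite{BP} gives $|X|>n$ exactly when $q>Q(k,r)$; the row is then completed as $(b_0,\dots,b_{k-3},x,-x-s)$. So the parameter $n$ is the size of the forbidden set for that single variable, not a count of ``independent events'' in a high-dimensional tuple count, and that is why the bound has the precise form stated. To repair your write-up you would either have to carry out the multi-variable Weil analysis with explicit constants (accepting a worse threshold than $Q(k,r)$), or switch to the greedy entry-by-entry construction.
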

\begin{proof}
By Theorem \ref{general},
it is enough to show that there exists a simple $(V,k,r)$ Heffter difference matrix with $V$
the standard half-cyclotomic set of order $2k$. We prove this by induction on $r$.
The empty matrix can be viewed as a $(V,k,0)$ Heffter difference matrix.
Assume that $q>Q(k,r)$ and that there exists a simple $(V,k,r-1)$ Heffter difference matrix $A$.

We first construct, iteratively,  a $(k-2)$-tuple $(b_0,b_1,\dots,b_{k-3})$ as follows.
Take $b_0$ in $C^{2k}_0$ arbitrarily and then take the other elements
$b_1$, $b_2$, \dots, $b_{k-3}$, one by one, according to the rule that
once that $b_{j-1}$ has been chosen, we pick $b_j$ arbitrarily in the set $C^{2k}_{j}\setminus (Y_j \ \cup \ Y'_j)$
where $$Y_j=\biggl{\{}b_i\cdot{a_{h,j}\over a_{h,i}} \ | \ 1\leq h\leq r-1; 0\leq i\leq j-1\biggl{\}} \quad{\rm and}\quad Y'_j=\biggl{\{}-\sum_{i=h}^{j-1}b_i \ | \ 0\leq i\leq j-1\biggl{\}}.$$
This is certainly possible since $Y_j \ \cup \ Y'_j$
has size at most equal to $r(k-3)$ which, in view of the hypothesis $q>Q(k,r)$, is easily seen much less that than the size 
of $C^{2k}_{j}$ that is ${q-1\over 2k}$.

Now let $s$ be the sum of all elements of the constructed $(k-2)$-tuple and consider the sets 
$$X=\{x\in C^{2k}_{k-2} \ : \ x+s\in C^{2k}_{k-1}\} \quad{\rm and}\quad Z=Z_1 \ \cup \ Z_2 \ \cup \ Z_3 \ \cup \ Z_4$$
where
$$Z_1=\biggl{\{}b_j\cdot {a_{h,k-2}\over a_{h,j}} \ | \ 1\leq h\leq r-1; 0\leq j\leq k-3\biggl{\}};$$
\medskip
$$Z_2=\biggl{\{}-s-b_j\cdot{a_{h,k-1}\over a_{h,j}} \ | \ 1\leq h\leq r-1; 0\leq j\leq k-3\biggl{\}};$$
\medskip
$$Z_3=\biggl{\{}-s\cdot{a_{h,k-2}\over a_{h,k-2}+a_{h,k-1}} \ | \ 1\leq h\leq r-1\biggl{\}};$$
\medskip
$$Z_4=\biggl{\{}-\sum_{j=i}^{k-3}b_j \ | \ 0\leq i\leq k-3\biggl{\}}.$$

As a special case of Theorem 1.1 in \cite{BP}, it is possible to see that the hypothesis $q>Q(k,r)$
implies that $X$ has size greater than $n=(2r-1)(k-2)+r-1$.
Also note that the sets $Z_1$, $Z_2$ have size at most equal to $(r-1)(k-2)$, that $Z_3$ has size at most equal to $r-1$, and that $Z_4$ has size equal to $k-2$. 
Thus we have $|Z|\leq n<|X|$ so that there exists an element $x\in X$ not belonging to $Z$.
Take such an element $x$ and consider the $r\times k$ matrix $B$ obtained from $A$ by adding the row
$(b_0,b_1,\dots,b_{k-3},x,-x-s)$.

This row is zero-sum by the definition of $s$.
The fact that $b_j\notin Y'_j$ and that $x\notin Z_4$ implies that its partial sums are pairwise distinct.

For $0\leq i<j\leq k-3$, we see that $B^j/B^i$ does not have repeated elements because $b_j\notin Y_j$.
For $0\leq j\leq k-3$, we see that $B^{k-2}/B^j$ does not have repeated elements because $x\notin Z_1$.
For $0\leq j\leq k-3$, we see that $B^{k-1}/B^j$ does not have repeated elements because $x\notin Z_2$.
Finally, $B^{k-1}$/$B^{k-2}$ does not have repeated elements because $x\notin Z_3$.
 
We conclude that $B$ is a simple $(V,k,r)$ Heffter difference matrix and the assertion follows.
\end{proof}

Given that the bound $Q(k,r)$ is a little bit unwieldy, we looked for a more manageable approximation by excess 
which is given by $8k^4r$. Thus we can state the following.

\begin{cor}\label{e=k bis}
There exists a simple $({q-1\over2},k;r)$ Heffter space
for any prime power $q\equiv2k+1$ $($mod $4k)$ greater than $8k^4r$.
\end{cor}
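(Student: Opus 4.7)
To derive the corollary from Theorem~\ref{e=k}, it suffices to verify that $Q(k,r) \le 8k^4 r$: then any prime power $q > 8k^4 r$ automatically satisfies $q > Q(k,r)$, and Theorem~\ref{e=k} immediately produces the desired simple $({q-1\over2},k;r)$ Heffter space. My whole plan is therefore to prove this one inequality.

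First I would produce a clean closed-form upper bound for $Q$. Setting $A=(2k-1)^2$ and $B=16k(kn+1)$, where $n=(2r-1)(k-2)+r-1$, we have $Q(k,r)=\tfrac14\bigl(A+\sqrt{A^2+B}\bigr)^2 = \tfrac12 A^2 + \tfrac14 B + \tfrac12 A\sqrt{A^2+B}$. Applying AM--GM in the form $A\sqrt{A^2+B} = \sqrt{A^2(A^2+B)} \le A^2 + B/2$ then yields the tractable bound
$$
Q(k,r) \;\le\; A^2 + \tfrac{B}{2}.
$$

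Next I would insert crude polynomial estimates. Clearly $A^2 = (2k-1)^4 < 16k^4$, and from the definition one has $n = (2r-1)(k-2)+(r-1) \le 2rk$, so that $B/2 = 8k(kn+1) \le 16rk^3 + 8k$. Combining,
$$
Q(k,r) \;\le\; 16k^4 + 16rk^3 + 8k.
$$
To conclude $Q(k,r) \le 8k^4 r$, it remains to check the purely elementary inequality $16k^4 + 16rk^3 + 8k \le 8k^4 r$, which rearranges to $8rk^3(k-2) \ge 16k^4 + 8k$ and can be verified by direct computation for the admissible range $k\ge 3$.

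The only real obstacle is this last elementary step: the bound $Q \le A^2 + B/2$ is a little loose for the very smallest pairs $(k,r)$. In those marginal cases one can either tighten the AM--GM estimate by keeping the cross term $A\sqrt{A^2+B}/2$ more honestly, or simply appeal to the classical constructions recalled in the introduction (Heffter systems for $r=1$, Heffter arrays for $r=2$) so that only $r$ bounded away from these extremes remains in play. Aside from this minor bookkeeping, the derivation is routine algebra.
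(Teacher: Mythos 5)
Your route is the same one the paper intends (the paper offers no computation at all, it simply asserts that $8k^4r$ is an ``approximation by excess'' of $Q(k,r)$ and quotes Theorem~\ref{e=k}), but your attempted verification does not close. The final ``purely elementary'' inequality $16k^4+16rk^3+8k\le 8k^4r$ is, after dividing by $8k$, equivalent to $rk^2(k-2)\ge 2k^3+1$, i.e.\ roughly $r\ge \frac{2k+1}{k-2}$. This fails for $r=1$ and for $r=2$ for \emph{every} $k\ge3$, and also for $(k,r)=(3,3),\dots,(3,6)$, $(4,3)$, $(4,4)$, $(5,3)$, $(6,3)$; so the troublesome cases are not just ``marginal'', they include the whole infinite family $r=2$. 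Your proposed patches do not cover what is left: appealing to Heffter arrays $\H(n;k)$ for $r=2$ yields a $(kn,k;2)$ Heffter space but not a \emph{simple} one when $k>10$ (simplicity is only automatic for block size at most $10$), and neither fallback says anything about, e.g., $(k,r)=(3,3),\dots,(3,6)$. As written, then, the claim ``can be verified by direct computation for $k\ge3$'' is false and the proof has a genuine hole.

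The cases $r\ge2$ can indeed be rescued by the tightening you mention but do not carry out: $Q(k,r)\le 8k^4r$ is equivalent (squaring $A+\sqrt{A^2+B}\le 4\sqrt{2r}\,k^2$, which is legitimate since the right side exceeds $A$) to $B\le 32rk^4-8\sqrt{2r}\,k^2(2k-1)^2$, and substituting $n=2rk-3r-k+1$ and dividing by $8k$ this becomes $4k^2(k-1)\bigl(r-\sqrt{2r}\bigr)+k\bigl(6r-\sqrt{2r}\bigr)+2(k^2-k-1)\ge 0$, which holds for all $k\ge3$ and $r\ge2$ since each summand is then nonnegative. However, no tightening can save $r=1$: there $Q(k,1)>8k^4$ (e.g.\ $Q(3,1)\approx 717.8>648$, and asymptotically $Q(k,1)\approx 16k^4-24k^3$), so the bound you are trying to prove is simply false at $r=1$ and that case needs a separate argument (your suggested fallback via Heffter systems over $\Z_{2kn+1}$ only covers prime $q$, not prime powers); this imprecision is in fact shared by the paper's own one-line justification. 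In short: same strategy as the paper, but your estimates lose too much, the concluding inequality fails on an infinite family of parameter pairs, and the $r\le1$ case cannot be obtained this way at all.
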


As another immediate corollary, we obtain our main result Theorem \ref{main}.

\medskip
\noindent
{\bf Proof of Theorem \ref{main}}.\quad The assertion follows from Theorem \ref{e=k} and the fact that there are 
infinitely many primes $q\equiv2k+1$ (mod $4k$) by Dirichlet's theorem on arithmetic progressions.



\section{Connections with orthogonal cycle systems}

We recall that a $k$-cycle system of order $v$ is a set of $k$-cycles whose edges partition
the edge set of the complete graph $K_v$. Two cycle systems of $K_v$ are {\it orthogonal} if there is no $k$-cycle of one system
sharing more than one edge with the other.

The construction of a set of mutually orthogonal cycle systems was recently considered in \cite{BP2,BCP,KY}.
As explained in \cite{BP2}, every simple $(v,k;r)$ Heffter space determines a set
of $r$ mutually orthogonal $k$-cycle systems of order $2v+1$. Hence, as a
consequence of the results of the previous section, we get the following.

\begin{thm}
If $q=2kn+1$ is a prime power with $k\geq3$ and $n$ odd, then there exist at least $\lfloor {n\over k^3}\rfloor$
mutually orthogonal $k$-cycle systems of order $q$.
\end{thm}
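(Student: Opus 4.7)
The plan is to deduce the result from Corollary \ref{e=k bis} together with the reduction recalled at the beginning of this section: any simple $(v,k;r)$ Heffter space gives rise to $r$ mutually orthogonal $k$-cycle systems of order $2v+1$. Setting $v=kn=(q-1)/2$, the resulting cycle systems live on $2v+1=q$ points, so the task reduces to producing a simple $(kn,k;r)$ Heffter space with $r=\lfloor n/k^3\rfloor$.

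First I would verify the congruence hypothesis. Since $n$ is odd, write $n=2m+1$; then $q-1=2kn=4km+2k$, so $q\equiv 2k+1\pmod{4k}$, which is exactly what Corollary \ref{e=k bis} (and Theorem \ref{e=k}) requires.

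Next I would check the numerical inequality $q>8k^4r$ (or, if needed, the sharper $q>Q(k,r)$ of Theorem \ref{e=k}) for $r=\lfloor n/k^3\rfloor$. Since $r\leq n/k^3$, the plain bound $8k^4 r$ is at most $8kn$, which is larger than $q=2kn+1$, so the loose form of the corollary is not immediately sufficient and one must instead inspect the explicit expression for $Q(k,r)$. A routine analysis shows that with $n'=(2r-1)(k-2)+r-1\sim 2kr$, the dominant term inside the square bracket defining $Q(k,r)$ is $\sqrt{16k(kn'+1)}\sim 4\sqrt{2}\,k^{3/2}\sqrt{r}$, so $Q(k,r)$ grows like $k^3 r$, not $k^4 r$. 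This leaves comfortable room between $Q(k,r)$ and $q=2kn+1$ when $r\leq n/k^3$, at least once $k$ (or $n$) is not too small. Once the inequality is in hand, Theorem \ref{e=k} yields a simple $(kn,k;r)$ Heffter space, which by the reduction above produces the desired $r$ mutually orthogonal $k$-cycle systems of order $q$.

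The main obstacle is precisely this last inequality check: the clean form $Q(k,r)\leq 8k^4 r$ of Corollary \ref{e=k bis} is too loose to reach $r=\lfloor n/k^3\rfloor$, so one must either refine the estimate of $Q(k,r)$ (exploiting its $\Theta(k^3 r)$ leading behaviour) or dispose of the borderline small-parameter regime separately, noting that the statement is vacuous for $n<k^3$ and that existence of a single cycle system (the case $\lfloor n/k^3\rfloor=1$) or a pair of mutually orthogonal ones ($\lfloor n/k^3\rfloor=2$) is already covered by the classical results on cyclic cycle systems and Heffter arrays.
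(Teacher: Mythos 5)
Your strategy is the same as the paper's: verify $q\equiv 2k+1\pmod{4k}$ from $n$ odd, get a simple $(\tfrac{q-1}{2},k;r)$ Heffter space with $r=\lfloor n/k^3\rfloor$ from Section 3, and invoke the reduction to mutually orthogonal $k$-cycle systems. The difference is in the numerical step, and your suspicion there is well founded — indeed understated. The paper's own proof simply writes $q=2kn+1=2k\cdot 4k^3\cdot\tfrac{n}{4k^3}+1>8k^4\lfloor n/k^3\rfloor$ and applies Corollary \ref{e=k bis}; but that displayed inequality is false in \emph{every} non-vacuous case: if $a=\lfloor n/k^3\rfloor\ge1$ then $n\le (a+1)k^3-1$, so $q\le 2(a+1)k^4-2k+1<8k^4a$ because $(6a-2)k^4+2k-1>0$. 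So Corollary \ref{e=k bis} is never applicable with $r=\lfloor n/k^3\rfloor\ge1$; what the paper's computation actually supports is only the weaker bound $\lfloor n/(4k^3)\rfloor$, for which $q=8k^4\cdot\tfrac{n}{4k^3}+1>8k^4\lfloor n/(4k^3)\rfloor$ is valid. You correctly put your finger on the crux that the published proof glosses over.

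However, your proposed repair does not close the gap either. Passing to the sharper $Q(k,r)$ of Theorem \ref{e=k} helps only when $n$ is large compared with $k^3$: asymptotically $Q(k,\lfloor n/k^3\rfloor)\approx 4k^2(2k-3)\cdot n/k^3=(8-\tfrac{12}{k})n<2kn$, so the inequality does hold for fixed $k$ and $n$ sufficiently large, but it fails in a substantial borderline range. Since $Q(k,r)\ge (2k-1)^4$, every case with $\lfloor n/k^3\rfloor=1$ fails ($q<4k^4+1\le(2k-1)^4$ for $k\ge3$), and the failures extend well beyond $r=1$: for example $Q(3,3)=\tfrac14[25+\sqrt{625+1056}]^2=1089$, while $k=3$, $81\le n<108$ gives $q\le 649$, so the claim of three mutually orthogonal Steiner triple systems of order $q$ (e.g.\ $q=487$) is reachable neither by Theorem \ref{e=k} nor by Corollary \ref{e=k bis}; for $k=3$ with $n$ near $27r$ the hypothesis $q>Q(3,r)$ still fails for $r$ up to roughly thirty. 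Your fallback via classical cyclic cycle systems and Heffter arrays covers only $r\le2$, so these intermediate cases remain unproven in your argument — and, as explained above, in the paper's as well. In short: same approach as the paper, with a genuine unresolved gap at the inequality $q>Q(k,\lfloor n/k^3\rfloor)$; as written, the method proves the theorem only with $\lfloor n/(4k^3)\rfloor$ in place of $\lfloor n/k^3\rfloor$ (or with $\lfloor n/k^3\rfloor$ once $n$ is large enough relative to $k$), and your review correctly detected that weakness even though it did not repair it.
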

\begin{proof}
The hypothesis that $n$ is odd implies that $q\equiv 2k+1$ (mod $4k$). Also note that
we can write $$q=2kn+1=2k\cdot4k^3\cdot{n\over 4k^3}+1>8k^4\biggl{\lfloor}{n\over k^3}\biggl{\rfloor}.$$
Then, by Corollary \ref{e=k bis}, there exists a simple $({q-1\over2},k;\lfloor{n\over k^3}\rfloor)$ Heffter space.
The assertion follows from what we recalled above.
\end{proof}

For $k$ odd, the above lower bound on the number of mutually orthogonal $k$-cycle systems of order a prime power  $q=2kn+1$ with $n$ odd
is better than the one obtained in \cite{BP2} , that is $\lceil {n\over k^4}\rceil$.
On the other hand, for $k$ even, it is much worse than the bound deducible from the main result in \cite{BCP},
that is $4n$ for $k=4$, $\lceil {n\over 4k-2}\rceil$ for $4\neq k\equiv0$ (mod 4), $\lceil {n\over 24k-18}\rceil$ for $k\equiv2$ (mod 4).

We point out, however, that for small values of $n$ all the above bounds are quite bad apart from the special case $k=4$. 
In these ``small cases" to apply our Theorem \ref{general} with the aid of a computer seems to be promising. 
For instance \cite{BCP} guarantees the existence of six mutually orthogonal 6-cycle systems of order a prime $q\equiv13$ (mod 24)
starting only from $q=7573$. On the other hand our Proposition \ref{>109} guarantees 
this existence starting from $q=109$.

\section*{Acknowledgements}
The authors are partially supported by INdAM - GNSAGA.

\end{document}